\documentclass[12 pt]{amsart}
\usepackage{amsmath}
\usepackage{amsthm}
\usepackage{amsfonts}
\everymath{\displaystyle}

\newtheorem{theorem}{Theorem}[section]
\newtheorem{lemma}[theorem]{Lemma}

\theoremstyle{definition}

\begin{document}
\title{Subset Powers of Directed Cycles}
\author{Daniel Pragel}
\maketitle

\begin{abstract}
For any directed graph $G$ with vertex set $V,$ the graph $G^{(d)}$ is said to be a \textit{subset power} of $G$ and is defined to have vertex set equal to the set of $d$-element subsets of $V$; in $G^{(d)}$, there is an edge $A \rightarrow B$ if and only if we can label the elements of $A$ and $B$ such that there is an edge in $G$ between each pair of corresponding elements. We determine the complete cycle structure of $C_l^{(d)},$ where $C_l$ is a directed cycle of length $l.$

  \bigskip\noindent \textbf{Keywords:} subset power; directed graph; directed cycle
\end{abstract}

\section{Definitions}
Consider a directed graph $G$ with vertex set $V,$ and let $d$ be any positive integer. As described in \cite{gll}, the graph $G^{(d)}$ is said to be a \textit{subset power} of $G$ and is defined to have vertex set equal to the set of $d$-element subsets of $V,$ such that, for any $d$-element subsets $A$ and $B$ of $V,$ there is an edge $A \rightarrow B$ if and only we can label the elements of $A$ and $B$ by $A = \{a_1,a_2,\dots,a_d\}$ and $B = \{b_1,b_2,\dots,b_d\}$ such that $a_i \rightarrow b_i$ in $G$ for $1 \leq i\leq d.$

\medskip
Let $l$ be a positive integer. $C_l$ is said to be the \textit{directed cycle of length $l$}, which is defined to be the graph with vertex set $\{0,1,2\dots,l-1\}$ and edges $j \rightarrow (j+ 1),$ for $0 \leq j \leq l-2,$ and $(l-1) \rightarrow 0.$ In general, in any directed graph $G,$ for any positive integer $k,$ we say that the ordered $k$-tuple $(v_1,v_2,\dots,v_k)$ of distinct vertices is a \textit{$k$-cycle} if we have $v_j \rightarrow v_{j + 1},$ for $1 \leq j \leq k-1,$ and $v_k \rightarrow v_1.$ Note that $C_l^{(d)}$ will be equal to the disjoint union of $k$-cycles for various $k.$ Let $n(l,d,k)$ be the number of $k$ cycles in $C_l^{(d)}.$ In \cite{gll}, it is shown that, if $l$ is odd, $n(l,2,l) = \frac{l - 1}{2},$ while, if $l$ is even, $n(l,2,l) = \frac{l}{2} - 1$ and $n\left(l,2,\frac{l}{2}\right) = 1.$ We extend these results below.

\section{Results}
First, we prove the following theorem about the existence of $k$-cycles in $C_l^{(d)}$.

\begin{theorem}\label{existence}Let $n(l,d,k)$ be defined as above. Then, $n(l,d,k) \neq 0$ if and only if $k$ divides $l$ and $l$ divides $dk.$

\end{theorem}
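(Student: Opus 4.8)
The plan is to first strip away the subset-power formalism. In $C_l$ the only edge leaving a vertex $a$ is $a \to a+1 \pmod l$, so for a $d$-subset $A = \{a_1,\dots,a_d\}$ the only $d$-subset $B$ for which a labeling as in the definition exists is $B = A+1 := \{\,a+1 \pmod l : a \in A\,\}$. Hence $C_l^{(d)}$ is exactly the digraph on the $d$-subsets of $\mathbb{Z}/l\mathbb{Z}$ with a single out-edge $A \to A+1$ at each vertex; since $A \mapsto A+1$ is a bijection, this is a disjoint union of directed cycles, as already noted. For a $d$-subset $A$ let $T(A) = \{\,j \in \mathbb{Z} : A + j = A \text{ in } \mathbb{Z}/l\mathbb{Z}\,\}$, a subgroup of $\mathbb{Z}$ containing $l\mathbb{Z}$; so $T(A) = p\mathbb{Z}$ for a unique divisor $p = p(A)$ of $l$, and the cycle through $A$ is $A, A+1, \dots, A+p(A)-1, A$, of length exactly $p(A)$ (the listed vertices are distinct because $p(A)$ generates $T(A)$). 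Thus $n(l,d,k) \neq 0$ if and only if some $d$-subset $A$ has $p(A) = k$. I take $1 \le d < l$ throughout; for $d = l$ there is a single vertex, forming a $1$-cycle, which is the trivial exceptional case.

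For necessity, suppose $p(A) = k$. Then $k, l \in T(A) = k\mathbb{Z}$, so $k \mid l$; and $A + k = A$ says that $A$ is a union of cosets of $\langle k\rangle \le \mathbb{Z}/l\mathbb{Z}$, each of size $l/k$ since $k \mid l$, so $(l/k) \mid |A| = d$, i.e.\ $l \mid dk$.

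For sufficiency, assume $k \mid l$ and $l \mid dk$, put $m = l/k$ and $t = dk/l = d/m$, and note that $m \mid d$ together with $1 \le d < l$ forces $1 \le t \le k-1$. Write $\mathbb{Z}/l\mathbb{Z}$ as the disjoint union of the $k$ cosets $C_0,\dots,C_{k-1}$ of $\langle k\rangle$, with $|C_i| = m$ and $C_i + j = C_{(i+j)\bmod k}$; for $S \subseteq \mathbb{Z}/k\mathbb{Z}$ set $A_S = \bigcup_{i\in S} C_i$, so that $A_S + j = A_{S+j}$ and $A_{S'} = A_{S''}$ iff $S' = S''$. Take $A = A_S$ with $S = \{0,1,\dots,t-1\}$: then $|A| = tm = d$, and $p(A)$ equals the least positive $j$ with $S + j = S$ in $\mathbb{Z}/k\mathbb{Z}$. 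Because $0 < t < k$, the set $\partial S := \{\,x\in S : x+1 \notin S\,\}$ equals the singleton $\{t-1\}$, and $\partial(S+j) = (\partial S) + j$; hence $S+j = S$ forces $t-1+j \equiv t-1 \pmod k$, i.e.\ $k \mid j$. Therefore $p(A) = k$ and $n(l,d,k) \neq 0$.

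I anticipate no real difficulty; the one step deserving care is the sufficiency reduction from periods modulo $l$ to periods modulo $k$, which is exactly where $k \mid l$ is used — it makes $\langle k\rangle$ have index $k$ and makes translation permute the $k$ cosets cyclically — after which the observation that $\partial S$ is a singleton settles the minimal period of the constructed set in one line.
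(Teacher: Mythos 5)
Your proof is correct and follows essentially the same route as the paper: both arguments reduce $C_l^{(d)}$ to the shift map $A \mapsto A+1$ on $d$-subsets of $\mathbb{Z}/l\mathbb{Z}$, derive necessity from the fact that a period-$k$ set must be a union of cosets of $\langle k\rangle$ (forcing $k \mid l$ and $(l/k) \mid d$), and establish sufficiency with the same witness set (your $C_0 \cup \cdots \cup C_{t-1}$ is exactly the paper's $A_1 = \{i + jk\}$). Your version is somewhat tighter in that the stabilizer subgroup $T(A)$ and the singleton boundary $\partial S$ make rigorous the minimal-period claim the paper leaves as ``not hard to see,'' and you correctly flag the degenerate case $d = l$, where the statement as written actually fails for $k > 1$.
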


\begin{proof}

Let $\oplus$ denote addition modulo $l.$ Suppose that $(A_1,A_2,\dots,A_k)$ is a $k$-cycle in $C_l^{(d)}.$ Then, if $a \in A_1$ we must have $a \oplus s \in A_{1 + s}$ for $1 \leq s \leq k - 1,$ and, since $A_k \rightarrow A_1,$ we must have $a \oplus k \in A_1.$ Therefore, applying this argument inductively, we must have $a \oplus nk \in A_1$ for all positive integers $n.$ If $r \neq 0$ is the remainder when $l$ is divided by $k,$ then this implies that $a \oplus r \in A_1.$ But then, $(A_1,A_2,\dots,A_r)$ forms an $r$-cycle in $C_l^{(d)}$, so $A_{r+1} = A_1,$ contradicting the fact that $(A_1,A_2,\dots,A_k)$ was a $k$-cycle. Therefore, it must be the case that $k$ divides $l,$ which implies that we can partition $A_1$ into disjoint subsets of the form $\{a,a\oplus k,a\oplus2k,\dots,a\oplus(c-1)k\},$ where $c= l/k.$ But this implies that $c$ divides $d,$ and thus $d/c= d/(l/k) = dk/l$ is an integer, which, in turn, implies that $l$ divides $dk.$

\medskip
Conversely, suppose that $k$ divides $l$ and that $l$ divides $dk.$ Let $c = l/k$ and let $t = dk/l.$ Note that, if we let \[A_1 = \{i + jk\,|\, 0 \leq i \leq t-1,0\leq j \leq c-1\}\] and, for $1 \leq i \leq k-1,$ let \[A_{i+1} = \{a \oplus i\,|\, a \in A_1\},\]it is not hard to see that $(A_1,A_2,\dots,A_k)$ is a $k$-cycle in $C_l^{(d)},$ showing that $n(l,d,k) \neq 0.$

\end{proof}

We now prove the following lemma.

\begin{lemma}\label{red}Suppose that $k$ divides $l$ and that $l$ divides $dk.$ Then, we have \[n(l,d,k) = n(k,dk/l,k).\]
\end{lemma}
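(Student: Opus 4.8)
The plan is to realize both counts as counting the same objects, via the quotient of $\mathbb{Z}/l\mathbb{Z}$ by the subgroup generated by $k$. Write $c = l/k$ and $t = dk/l$; as in the proof of Theorem~\ref{existence}, the hypothesis $l \mid dk$ is precisely $c \mid d$, so $t = d/c$ is a positive integer. Let $H = \langle k \rangle = \{0, k, 2k, \dots, (c-1)k\}$ inside $\mathbb{Z}/l\mathbb{Z}$, and let $\phi \colon \mathbb{Z}/l\mathbb{Z} \to \mathbb{Z}/k\mathbb{Z}$ be reduction modulo $k$. This is a surjective homomorphism with kernel $H$, so each of its fibers has size $c$ and $S \mapsto \phi^{-1}(S)$ is an injection from subsets of $\mathbb{Z}/k\mathbb{Z}$ to subsets of $\mathbb{Z}/l\mathbb{Z}$ that multiplies cardinality by $c$. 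I will write $\oplus$ for addition modulo $l$, as in the earlier proof.

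First I would extract two structural facts already implicit in the proof of Theorem~\ref{existence}. For any $k$-cycle $(A_1,\dots,A_k)$ in $C_l^{(d)}$: since the unique out-neighbor of a vertex $a$ in $C_l$ is $a \oplus 1$, the only bijection $A_i \to A_{i+1}$ witnessing an edge is $a \mapsto a \oplus 1$, so $A_{i+1} = \{a \oplus 1 : a \in A_i\}$; and, running the inductive argument of that proof, each $A_i$ is a union of cosets of $H$. Hence $A_i = \phi^{-1}(S_i)$ for a unique $S_i \subseteq \mathbb{Z}/k\mathbb{Z}$, with $|S_i| = |A_i|/c = d/c = t$. As $\phi$ is a homomorphism, $A_{i+1} = \{a\oplus 1 : a \in A_i\}$ becomes $S_{i+1} = S_i + 1$ in $\mathbb{Z}/k\mathbb{Z}$, and the $S_i$ are pairwise distinct because $S \mapsto \phi^{-1}(S)$ is injective; so $(S_1,\dots,S_k)$ is a $k$-cycle in $C_k^{(t)}$. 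Conversely, the same remark about out-neighbors shows any $k$-cycle $(S_1,\dots,S_k)$ of $C_k^{(t)}$ satisfies $S_{i+1} = S_i + 1$, and then setting $A_i = \phi^{-1}(S_i)$ gives pairwise distinct $d$-element subsets of $\mathbb{Z}/l\mathbb{Z}$ with $A_{i+1} = \{a \oplus 1 : a \in A_i\}$ and $A_1 = \{a \oplus 1 : a \in A_k\}$, i.e.\ a $k$-cycle in $C_l^{(d)}$.

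Finally I would observe that these two constructions are mutually inverse and commute with cyclic rotation of the tuple, hence descend to a bijection between the $k$-cycles of $C_l^{(d)}$ and the $k$-cycles of $C_k^{(t)}$, giving $n(l,d,k) = n(k,t,k) = n(k,dk/l,k)$. I do not expect a serious obstacle here; the point requiring care is bookkeeping — keeping the meaning of ``$k$-cycle'' consistent on both sides (an ordered tuple, considered up to cyclic shift) so that the correspondence is genuinely a bijection, and checking the cardinality identity $|A_i| = c\,|S_i| = ct = d$, which is exactly where $l \mid dk$ is used.
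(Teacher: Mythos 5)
Your proposal is correct and follows essentially the same route as the paper: both use reduction modulo $k$, together with the observation from the proof of Theorem~\ref{existence} that each $A_i$ is a union of cosets of $\langle k\rangle$, to induce a bijection between the $k$-cycles of $C_l^{(d)}$ and those of $C_k^{(t)}$. Your write-up merely makes explicit the inverse map $S \mapsto \phi^{-1}(S)$ and the distinctness checks that the paper leaves as ``not hard to see.''
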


\begin{proof}
Let $c = l/k$ and let $t = dk/l.$ Let $\mathcal{C}$ be the set of $k$-cycles in $C_l^{(d)}$ and let $\mathcal{C}'$ be the set of $k$-cycles in $C_k^{(t)}.$ Define $\phi: \{0,1,2,\dots,l-1\} \rightarrow \{0,1,2,\dots,k-1\}$ by letting $\phi(j)$ be the remainder when $j$ is divided by $k.$ Suppose that $(A_1,A_2,\dots,A_d) \in \mathcal{C}.$ Let $\oplus$ denote addition modulo $l.$ As shown in the proof of theorem \ref{existence}, we can label the elements of each $A_i$ as follows:\[A_i = \{a_{i,j} \oplus ks\,|\, 1\leq j \leq t, 0 \leq s \leq c-1\}, \] where each $a_{i,j}$ is distinct and is less than or equal to $k-1.$ Then, we have \[\phi(A_i) = \{\phi(a_{i,j} \oplus ks)\,|\,1 \leq j \leq t, 0 \leq s \leq c-1\} = \{a_{i,j}\,|\, 1 \leq j \leq t.\}\]It is not hard to see that $(\phi(A_1),\phi(A_2),\dots,\phi(A_d)) \in \mathcal{C}',$ and, in fact, that $\phi$ induces a bijection from $\mathcal{C}$ to $\mathcal{C}',$ showing that $|\mathcal{C}| = |\mathcal{C}'|$ and, therefore, $n(l,d,k) = n(k,t,k).$
\end{proof}

This leads us to a theorem which gives the complete cycle structure for $C_l^{(d)}.$
\begin{theorem}

Suppose that $k$ divides $l$ and that $l$ divides $dk.$ Let $t = \frac{dk}{l}$ and let \[p_1^{n_1}p_2^{n_2}\dots p_m^{n_m}\] be the prime factorization of the greatest common divisor of $k$ and $t.$ Let $M = \{1,2,\dots,m\}.$ Then, we have \[n(l,d,k) = \frac{1}{t }\sum_{S \subseteq M} (-1)^{|S|} \binom{\frac{k}{\prod_{i \in S} p_i} - 1}{\frac{t}{\prod_{i \in S} p_i} - 1}.\]

\end{theorem}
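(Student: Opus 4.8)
The plan is to combine Lemma~\ref{red} with a direct count of the $k$-cycles of $C_k^{(t)}$, which turns out to be a classical aperiodic-necklace enumeration. Write $\oplus$ for addition modulo $k$ and, for a $t$-element subset $A$ of $\{0,1,\dots,k-1\}$, write $A \oplus r = \{a \oplus r : a \in A\}$. The key first observation is that in $C_k^{(t)}$ every vertex has a unique out-neighbour: the only out-edge of a vertex $j$ of $C_k$ goes to $j \oplus 1$, so any labelling witnessing an edge out of $A$ must carry $A$ to $A \oplus 1$. Since $r \mapsto A \oplus r$ is a permutation of the $t$-element subsets, $C_k^{(t)}$ is a disjoint union of cycles, one for each orbit of the translation action of $\mathbb{Z}/k\mathbb{Z}$, and a $k$-cycle is precisely an orbit of full size $k$, that is, the orbit of an \emph{aperiodic} subset $A$ (one with $A \oplus r \neq A$ for $1 \le r \le k-1$). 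Each such orbit consists of $k$ subsets, so $n(k,t,k)$ equals $\tfrac1k$ times the number of aperiodic $t$-element subsets of $\{0,1,\dots,k-1\}$.

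To count the aperiodic subsets I would use inclusion--exclusion over the primes $p_1,\dots,p_m$ dividing $\gcd(k,t)$. For $S \subseteq M$ put $P_S = \prod_{i \in S} p_i$, so $P_S \mid \gcd(k,t)$. A $t$-subset $A$ satisfying $A \oplus (k/p_i) = A$ for every $i \in S$ is exactly a $t$-subset fixed by the unique subgroup $H_S \le \mathbb{Z}/k\mathbb{Z}$ of order $P_S$ (since $H_S$ is generated by the elements $k/p_i$, $i \in S$, the $p_i$ being distinct primes); such a subset is a union of cosets of $H_S$, and since each coset has size $P_S$ there are exactly $\binom{k/P_S}{t/P_S}$ of them. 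Moreover $A$ is aperiodic if and only if it is fixed by none of the translations $\oplus(k/p_i)$: any proper period $r$ of $A$ divides $k$, the subgroup $\langle r\rangle$ then has order $k/r$, which exceeds $1$ and divides $\gcd(k,t)$, so some $p_i$ divides $k/r$, and then the order-$p_i$ subgroup is contained in $\langle r\rangle$, forcing $A \oplus (k/p_i) = A$. By inclusion--exclusion the number of aperiodic $t$-subsets is therefore $\sum_{S \subseteq M} (-1)^{|S|}\binom{k/P_S}{t/P_S}$.

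Dividing by $k$ and applying, term by term, the elementary identity $\tfrac1k\binom{k/P}{t/P} = \tfrac1t\binom{k/P - 1}{t/P - 1}$ (valid because $t/P \ge 1$) gives
\[
n(k,t,k) \;=\; \frac1t \sum_{S \subseteq M} (-1)^{|S|}\binom{\frac{k}{P_S} - 1}{\frac{t}{P_S} - 1},
\]
and Lemma~\ref{red} with $t = dk/l$ then identifies the left-hand side with $n(l,d,k)$, which is the claim. I expect the only genuine work to be the inclusion--exclusion bookkeeping of the middle paragraph: namely, verifying that being simultaneously fixed by a family of the special translations is equivalent to being fixed by the subgroup $H_S$ of order $P_S$, and that an aperiodic subset avoids every single-prime event --- this last point resting on the observation that any proper period $r$ of $A$ satisfies $k/r \mid \gcd(k,t)$. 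As consistency checks, when $\gcd(k,t)=1$ the sum collapses to $\tfrac1t\binom{k-1}{t-1} = \tfrac1k\binom{k}{t}$ (every $t$-subset being aperiodic), and the case $d = 2$ recovers the values of $n(l,2,l)$ and $n(l,2,l/2)$ quoted in Section~1.
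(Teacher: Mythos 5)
Your argument is correct and takes essentially the same route as the paper: reduce to $n(k,t,k)$ via Lemma~\ref{red}, identify $k$-cycles of $C_k^{(t)}$ with translation orbits of aperiodic $t$-subsets of $\{0,1,\dots,k-1\}$, and remove the periodic subsets by inclusion--exclusion over the primes dividing $\gcd(k,t)$. The only difference is bookkeeping: the paper anchors each subset at the element $0$ and divides by $t$, obtaining $\binom{k/a-1}{t/a-1}$ directly, whereas you count all $t$-subsets, divide by the orbit size $k$, and then convert $\frac{1}{k}\binom{k/P_S}{t/P_S}$ into $\frac{1}{t}\binom{k/P_S-1}{t/P_S-1}$.
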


\begin{proof}

First, note that, by lemma \ref{red}, we have $n(l,d,k) = n(k,t,k).$ Let $(A_1,A_2,\dots,A_k)$ be a $k$-cycle of $C_k^{(t)}.$ Let $\oplus$ denote addition modulo $k.$ The elements of each $A_i$ can be labeled \[A_i = \{a_{i,1},a_{i,2},\dots,a_{i,t}\}\] such that $a_{i+1,j} = a_{i,j} \oplus 1$ for $1 \leq i \leq k,$ $1 \leq j \leq t.$ Thus, it follows that, for fixed $i,$ there must be exactly one $j$ with $a_{i,j} = 0,$ and thus precisely $t$ of the $A_i$ contain 0. 

\medskip
Conversely, consider any $(t-1)$-element subset $B$ of $\{1,2,\dots,k-1\}.$ Let $A_1 = \{0\} \cup B,$ and let \[A_{1 + i} = \{a \oplus i\,|\,a \in A_1\}\]for $i \geq 1.$ We have $A_{k + 1} = A_1.$ Let $r_B$ be the least positive integer $r$ such that $A_{r + 1} = A_1.$ Then, $(A_1,A_2,\dots,A_{r_B})$ is an $r_B$-cycle of $C_k^{(t)}$ and $r_B$ must divide $k.$ Define $\theta$ to be a function from the $(t-1)$-element subsets of $\{1,2,\dots,k-1\}$ to the divisors of $k$ be defined by letting $\theta(B) = r_B.$ Then, for all $a$ dividing $k,$ let $\mathcal{B}_a$ be the set of all $(t-1)$-element subsets $B$ of $\{1,2,\dots,k-1\}$ such that $\theta(B) = k/a.$ It follows that \[n(k,t,k) = \frac{1}{t}|\mathcal{B}_1|.\]Note that our proof of theorem \ref{existence} implies that, given any $a$ which divides $k,$ unless $a$ also divides $t,$ we have $\mathcal{B}_a = \emptyset.$ For all $a$ which divide both $k$ and $t,$ let $\mathcal{D}_a$ be the set of all $t-1$ subsets $B$ of $\{1,2,\dots,k-1\}$ such that $\theta(B)$ divides $k/a.$ Thus, $\mathcal{B}_a \subseteq \mathcal{D}_a.$ 

\medskip
Further, suppose that $a$ and $a'$ each divide both $k$ and $t,$ and that $a'$ divides $a.$ Then, we have $\mathcal{D}_a \subseteq \mathcal{D}_{a'}.$ Therefore, for all $a \neq 1$ such that $a$ divides $k$ and $t,$ there has to be at least one $i$ with $1 \leq i \leq m$ such that $p_i$ divides $a,$ which, in turn, implies that $\mathcal{D}_a \subseteq \mathcal{D}_{p_i}.$

\medskip
Therefore, $\mathcal{D}_1$ is the union of the disjoint sets $\mathcal{B}_1$ and $\textstyle\bigcup_{i \in M} \mathcal{D}_{p_i},$ and we have \[\mathcal{B}_1 = \mathcal{D}_1 \setminus \bigcup_{i \in M} \mathcal{D}_{p_i}.\]Thus, using the principle of inclusion-exclusion (described, in particular, in \cite{wilson}), we see that \[|\mathcal{B}_1| =  |\mathcal{D}_1| + \sum_{S \subseteq M, S \neq \emptyset} (-1)^{|S|}\left|\bigcap_{i \in S} \mathcal{D}_{p_i}\right|.\]Additionally, it is not hard to see that $\textstyle\bigcap_{i \in S} \mathcal{D}_{p_i} = \mathcal{D}_{\prod_{i \in S} p_i},$ and thus \[|\mathcal{B}_1| = \sum_{S \subseteq M} (-1)^{|S|} \left|\mathcal{D}_{\prod_{i \in S} p_i}\right|.\]Let $B$ be a $(t-1)$-element subset of $\{1,2,\dots,k-1\},$ and let $A_i$ be defined as above, with $A_1 = \{0\} \cup B$ and $A_{1 + i} = \{a \oplus i\,|\,a \in A_1\}$ for $i \geq 2.$ Then, $B \in \mathcal{D}_a$ if and only if $A_{k/a+1} = A_1.$Then, define $\phi: \{0,1,2,\dots,k-1\} \rightarrow \{0,1,2,\dots,k/a-1\}$ as in the proof of lemma \ref{red}, by letting $\phi(j)$ be the remainder when $j$ is divided by $k/a.$ Note that $0 \in A_1$ implies $\phi(0) = 0 \in \phi(A_1).$ Further, $\phi(A_1) = \{0\} \cup \phi(B),$ and it is not hard to see that $\phi$ induces a bijection from $\mathcal{D}_a$ to the set of $(t/a - 1)$-element subsets of $\{1,2,\dots,k/a-1\},$ and thus \[|\mathcal{D}_a| = \binom{k/a - 1}{t/a - 1}.\]Therefore, we have \begin{align*}n(l,d,k) &= n(k,t,k) = \frac{1}{t}|\mathcal{B}_1|\\ &= \frac{1}{t}\sum_{S \subseteq M} (-1)^{|S|} \left|\mathcal{D}_{\prod_{i \in S} p_i}\right|\\ &= \frac{1}{t}\sum_{S \subseteq M} (-1)^{|S|}\binom{\frac{k}{\prod_{i \in S}p_i} - 1}{\frac{t}{\prod_{i \in S}p_i} - 1}.\end{align*}

\end{proof}


\begin{thebibliography}{100}

\bibitem{gll}R. Lipton, K. Regan, ``Subset Powers of Graphs," G\"{o}del's Lost Letter and P=NP, April 18, 2013, http://rjlipton.wordpress.com/2013/04/18/subset-powers-of-graphs/

\bibitem{wilson}J.H. van Lint, R.M. Wilson, A Course in Combinatorics, 2nd Edition, Cambridge University Press, 2001.
\end{thebibliography}
\end{document}